\newcommand{\vx}{{\bf x}}
\newcommand{\vy}{{\bf y}}
\newcommand{\R}{\mathbb{R}}
\newtheorem{Thm}{Theorem}[section]
\newtheorem{Lem}[Thm]{Lemma}
\newtheorem{Prop}[Thm]{Proposition}
\newtheorem{Cor}[Thm]{Corollary}
\newtheorem{example}[Thm]{Example}
\newtheorem{remark}[Thm]{Remark}
\newtheorem{conjecture}[Thm]{Conjecture}
\begin{document}

\title{On the SOS Rank of Simple and Diagonal Biquadratic Forms}
\author{Yi Xu\footnote{School of Mathematics, Southeast University, Nanjing  211189, China. Nanjing Center for Applied Mathematics, Nanjing 211135,  China. Jiangsu Provincial Scientific Research Center of Applied Mathematics, Nanjing 211189, China. ({\tt yi.xu1983@hotmail.com})}
		\and
		Chunfeng Cui\footnote{School of Mathematical Sciences, Beihang University, Beijing  100191, China.
			({\tt chunfengcui@buaa.edu.cn})}
		\and {and \
			Liqun Qi\footnote{
				Department of Applied Mathematics, The Hong Kong Polytechnic University, Hung Hom, Kowloon, Hong Kong.
				({\tt maqilq@polyu.edu.hk})}
		}
	}

\date{\today}
\maketitle

\begin{abstract}
We study the sum-of-squares (SOS) rank of simple and diagonal biquadratic forms.
For simple biquadratic forms in $3 \times 3$ variables, we show that the maximum SOS rank is exactly $6$, attained by a specific six-term form.
We further prove that for any $m \ge 3$, there exists an $m \times m$ simple biquadratic form whose SOS rank is exactly $2m$.
Moreover, we show that for all $m, n \ge 3$, the maximum SOS rank over $m \times n$ simple biquadratic forms is at least $m+n$, which implies $\mathrm{BSR}(m,n) \ge m+n$.
For diagonal biquadratic forms with nonnegative coefficients, we prove an SOS rank upper bound of $7$, improving the general bound of $8$ for $3 \times 3$ forms.
These results provide new lower and upper bounds on the worst-case SOS rank of biquadratic forms and highlight the role of structure in reducing the required number of squares.

\medskip

\textbf{Keywords.} Biquadratic forms, sum-of-squares, SOS rank, simple forms, diagonal forms, positive semidefinite.

\medskip
\textbf{AMS subject classifications.} 11E25, 12D15, 14P10, 15A69, 90C23.
\end{abstract}

\section{Introduction}
Let $m, n \ge 2$.
A biquadratic form in variables $\vx = (x_1,\dots,x_m)$ and $\vy = (y_1,\dots,y_n)$ is a homogeneous polynomial
\[
P(\vx,\vy) = \sum_{i,k=1}^{m} \sum_{j,l=1}^{n} a_{ijkl} x_i x_k y_j y_l,
\]
with real coefficients $a_{ijkl}$.
It is called \emph{positive semidefinite (PSD)} if $P(\vx,\vy) \ge 0$ for all $\vx,\vy$, and \emph{sum-of-squares (SOS)} if it can be written as a finite sum of squares of bilinear forms.
The smallest number of squares required is the \emph{SOS rank} of $P$, denoted $\operatorname{sos}(P)$.

Understanding the SOS rank of structured biquadratic forms is a fundamental problem in real algebraic geometry and polynomial optimization.
In this note, we focus on two natural subclasses: \emph{simple} biquadratic forms (containing only distinct terms $x_i^2y_j^2$) and \emph{diagonal} biquadratic forms (terms $a_{ij}x_i^2y_j^2$).
Both classes are automatically SOS when PSD, but their minimal SOS ranks are nontrivial.

\begin{itemize}
    \item For $3 \times 3$ simple biquadratic forms, the maximum SOS rank is exactly $6$ (Theorem~\ref{thm:max_simple}).
    \item For any $m \ge 3$, there exists an $m \times m$ simple biquadratic form with SOS rank exactly $2m$, generalizing the $3\times3$ case (Theorem~\ref{thm:m-by-m}).
    \item { For all $m, n \ge 3$, the maximum SOS rank of $m \times n$ simple biquadratic forms is at least $m+n$, implying $\mathrm{BSR}(m,n) \ge m+n$ (Theorem~\ref{thm:mn-lower-bound}).}
    \item For $3 \times 3$ diagonal biquadratic forms, the SOS rank never exceeds $7$, improving the general bound $mn-1=8$ (Theorem~\ref{thm:diagonal_bound}).
\end{itemize}
Beyond structured forms, our work also provides evidence for the exact value of the maximum SOS rank among all $3\times3$ biquadratic forms.
We conjecture that $\mathrm{BSR}(3,3)=6$ (Conjecture~\ref{conj:BSR33}), a value attained by the simple form $P_{3,3,6}$;
we support this by showing that enriching this form with certain additional squares does not increase its SOS rank.

\section{Simple Biquadratic Forms}
Let $m \ge n$.
A biquadratic form is called \textbf{simple} if it contains only distinct terms of the type $x_i^2 y_j^2$.
We define a family of simple forms $P_{m,n,s}$ where $s = 1,\dots,mn$ counts the number of square terms, following a fixed ordering of index pairs $(i,j)$ (see \cite{QCX26} for details).
For $m=n=3$ the first six forms are:
\[
\begin{aligned}
P_{3,3,1} &= x_1^2 y_1^2, \\
P_{3,3,2} &= x_1^2 y_1^2 + x_2^2 y_2^2, \\
P_{3,3,3} &= x_1^2 y_1^2 + x_2^2 y_2^2 + x_3^2 y_3^2, \\
P_{3,3,4} &= x_1^2 y_1^2 + x_2^2 y_2^2 + x_3^2 y_3^2 + x_1^2 y_2^2, \\
P_{3,3,5} &= x_1^2 y_1^2 + x_2^2 y_2^2 + x_3^2 y_3^2 + x_1^2 y_2^2 + x_2^2 y_3^2, \\
P_{3,3,6} &= x_1^2 y_1^2 + x_2^2 y_2^2 + x_3^2 y_3^2 + x_1^2 y_2^2 + x_2^2 y_3^2 + x_3^2 y_1^2 .
\end{aligned}
\]

\begin{Thm}[A $3 \times 3$ simple form requiring six squares]\label{thm:3x3-tight}
The form
\[
P'(\vx,\vy) = P_{3,3,6}(\vx,\vy) \] \[ = x_1^2 y_1^2 + x_2^2 y_2^2 + x_3^2 y_3^2 + x_1^2 y_2^2 + x_2^2 y_3^2 + x_3^2 y_1^2
\]
satisfies $\operatorname{sos}(P') = 6$.
\end{Thm}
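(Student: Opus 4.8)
The plan is to reduce everything to the analysis of PSD Gram matrices and then exploit a rigidity coming from the combinatorial structure of the support
\[
S = \{(1,1),(2,2),(3,3),(1,2),(2,3),(3,1)\}.
\]
The upper bound $\operatorname{sos}(P') \le 6$ is immediate, since $P'$ is displayed literally as a sum of six squares of the bilinear forms $x_i y_j$. The whole content is therefore the lower bound $\operatorname{sos}(P') \ge 6$, which I would obtain by showing that $P'$ admits an essentially unique PSD Gram representation whose rank is forced to equal $6$.

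First I would order the nine monomials $z_{ij} = x_i y_j$ and write any SOS certificate as $P' = \vz^\T G \vz$ with $G \succeq 0$ a symmetric $9 \times 9$ matrix, recalling the standard identity $\operatorname{sos}(P') = \min\{\rank G : G \succeq 0,\ \vz^\T G \vz = P'\}$. Expanding $\vz^\T G \vz$ and matching coefficients of the quartic monomials $x_i x_k y_j y_l$ produces a family of linear constraints on $G$: the diagonal entry $G_{(i,j),(i,j)}$ equals $1$ if $(i,j) \in S$ and $0$ otherwise; the entries $G_{(i,j),(i,l)}$ (shared $x$-index, $j \neq l$) and $G_{(i,j),(k,j)}$ (shared $y$-index, $i \neq k$) must vanish; and for distinct $x$- and $y$-indices one has the coupling relation $G_{(i,j),(k,l)} + G_{(i,l),(k,j)} = 0$. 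Next I would invoke the elementary fact that a zero diagonal entry of a PSD matrix forces the whole corresponding row and column to vanish. Since the three pairs outside $S$, namely $(1,3),(2,1),(3,2)$, have zero diagonal entries, $G$ is supported on the $6\times 6$ principal block $H$ indexed by $S$, and $H$ has all-ones diagonal.

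The crux, and the step I expect to be most delicate, is to show that every off-diagonal entry of $H$ is forced to be $0$, so that $H = I_6$ and hence $\rank G = 6$. I would argue this uniformly through a bipartite-graph picture: regard $S$ as a set of edges between the $x$-indices $\{1,2,3\}$ and the $y$-indices $\{1,2,3\}$, where it forms a single $6$-cycle and, in particular, contains no $4$-cycle. For a pair of distinct elements of $S$ sharing an $x$- or a $y$-index, the corresponding entry of $H$ vanishes directly by the same-index constraints. For a pair $(i,j),(k,l) \in S$ with $i \neq k$ and $j \neq l$, the coupling relation ties $H_{(i,j),(k,l)}$ to the partner entry $G_{(i,l),(k,j)}$; this partner lies inside $H$ only when both $(i,l)$ and $(k,j)$ belong to $S$, that is, only when $i,j,k,l$ span a $4$-cycle in $S$. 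Since $S$ has no $4$-cycle, at least one of $(i,l),(k,j)$ lies outside $S$, where $G$ already vanishes, forcing $H_{(i,j),(k,l)} = 0$. A short check over the fifteen pairs confirms the claim, yielding $H = I_6$ and therefore $\operatorname{sos}(P') = 6$.
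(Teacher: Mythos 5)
Your proposal is correct and takes essentially the same route as the paper: the paper's lower-bound argument (spelled out in detail in the proof of Theorem~\ref{thm:m-by-m}, with your no-$4$-cycle observation being exactly Lemma~\ref{lem:support-property} in the case $m=3$) shows that the six coefficient vectors $\mathbf{C}_{ij}$, $(i,j)\in S$, are forced to be orthonormal, which is word-for-word your conclusion $H=I_6$, since the entries of the Gram matrix are precisely the inner products $\mathbf{C}_{ij}\cdot\mathbf{C}_{kl}$. The only cosmetic difference is that you express the rank bound through $\operatorname{sos}(P')=\min\{\rank G : G\succeq 0,\ \vz^\T G\vz=P'\}$ rather than through the impossibility of placing six orthonormal vectors in $\R^5$.
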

\begin{proof}
The upper bound $\operatorname{sos}(P') \le 6$ is immediate from the decomposition
\[
P' = (x_1y_1)^2 + (x_2y_2)^2 + (x_3y_3)^2\] \[ + (x_1y_2)^2 + (x_2y_3)^2 + (x_3y_1)^2 .
\]
For the lower bound, assume a representation with only five squares.
Writing each square as a bilinear form and comparing coefficients leads to six pairwise orthogonal unit vectors in $\R^5$, which is impossible.
Hence $\operatorname{sos}(P') \ge 6$; see \cite{QCX26} for details.
\end{proof}

To bound the SOS rank of simple forms with more terms, we use a combinatorial observation.
For $i\neq j$ and $k\neq l$ define
\[
T_{ijkl}(\vx,\vy) = x_i^2y_k^2 + x_j^2y_l^2 + x_i^2y_l^2 + x_j^2y_k^2\] \[ = (x_iy_k + x_jy_l)^2 + (x_iy_l - x_jy_k)^2,
\]
so $\operatorname{sos}(T_{ijkl}) = 2$.

\begin{Lem}\label{lem:contains_Tij}
Every simple biquadratic form in variables $(x_1,x_2,x_3)$ and $(y_1,y_2,y_3)$ containing at least $7$ distinct terms $x_a^2y_b^2$ contains $T_{ijkl}$ for some $i\neq j$, $k\neq l$.
\end{Lem}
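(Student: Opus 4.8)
The plan is to strip the analytic content away entirely and reduce the lemma to a counting statement about $0$--$1$ patterns on a $3\times 3$ grid. First I would associate to any simple biquadratic form $P$ its \emph{support} $S = \{(a,b) : x_a^2 y_b^2 \text{ appears in } P\}\subseteq\{1,2,3\}\times\{1,2,3\}$, read as a set of occupied cells in a $3\times 3$ grid whose rows are indexed by the $x$-variable and columns by the $y$-variable. Since $T_{ijkl}$ is by definition the collection of the four terms $x_i^2y_k^2,\, x_i^2y_l^2,\, x_j^2y_k^2,\, x_j^2y_l^2$ with $i\ne j$ and $k\ne l$, the statement ``$P$ contains $T_{ijkl}$'' is exactly the statement ``$S$ contains the four corners $(i,k),(i,l),(j,k),(j,l)$ of some combinatorial $2\times 2$ rectangle.'' So the lemma becomes: every $S$ with $|S|\ge 7$ contains such a rectangle, equivalently, every rectangle-free $S$ satisfies $|S|\le 6$. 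This is precisely the Zarankiewicz extremal value $z(3,3;2,2)=6$.

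The key step is a double-counting identity on column degrees. For each column $k\in\{1,2,3\}$ let $d_k=|\{i:(i,k)\in S\}|$, so $|S|=d_1+d_2+d_3$. A $2\times 2$ rectangle is the same thing as a pair of rows that occur together in two distinct columns; hence $S$ is rectangle-free if and only if every pair of rows shares at most one common column. I would then count the incidences $(\{i,j\},k)$ with $i<j$ and $(i,k),(j,k)\in S$: summed over columns this equals $\sum_k\binom{d_k}{2}$, while summed over the $\binom{3}{2}=3$ row-pairs it is at most $3$ when $S$ is rectangle-free. Thus rectangle-freeness forces $\sum_k\binom{d_k}{2}\le 3$.

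Finally I would close with a short convexity/arithmetic argument. For integers $d_k\in\{0,1,2,3\}$ with $\sum_k d_k\ge 7$, convexity of $\binom{\cdot}{2}$ makes $\sum_k\binom{d_k}{2}$ smallest when the $d_k$ are as balanced as possible, namely $(3,2,2)$, giving $\binom{3}{2}+\binom{2}{2}+\binom{2}{2}=5>3$; checking the only admissible partitions of $7$ (namely $(3,3,1)$ and $(3,2,2)$) and the totals $8,9$ confirms the value always exceeds $3$. This contradicts the bound from the previous step, so no rectangle-free support can have $|S|\ge 7$, and every $P$ with at least $7$ distinct terms must contain some $T_{ijkl}$.

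I do not expect a genuine obstacle: the substance is entirely in choosing the right reformulation (the grid/support picture) and writing down the correct incidence identity. Once the problem is phrased in terms of the column degrees $d_k$, the conclusion is forced by convexity, and the whole argument is just the extremal count $z(3,3;2,2)=6$. The only place warranting care is making the dictionary between ``$P$ contains $T_{ijkl}$'' and ``$S$ contains a $2\times 2$ rectangle'' fully precise, including the conditions $i\ne j$ and $k\ne l$.
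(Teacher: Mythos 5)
Your proof is correct, but its key counting mechanism is genuinely different from the paper's. Both arguments begin with the same dictionary: the support $S\subseteq\{1,2,3\}\times\{1,2,3\}$ of the simple form, with ``$P$ contains $T_{ijkl}$'' translated to ``$S$ contains all four corners $(i,k),(i,l),(j,k),(j,l)$ of a combinatorial $2\times2$ rectangle.'' From there the paper argues on the \emph{complement}: with $|S|\ge 7$ there are at most two empty cells, each cell lies on only $4$ of the $9$ possible rectangles, so two empty cells can block at most $8<9$ rectangles and some rectangle must be fully occupied (the paper's wording of this step is loose, but that is its content). You instead run the standard Zarankiewicz-type double count on column degrees: rectangle-freeness means each of the $3$ row pairs co-occurs in at most one column, forcing $\sum_k\binom{d_k}{2}\le 3$, while any degree sequence with $d_1+d_2+d_3\ge 7$ and $d_k\le 3$ gives $\sum_k\binom{d_k}{2}\ge 5$, a contradiction; your case check of $(3,2,2)$, $(3,3,1)$, and the totals $8,9$ is complete and correct. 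What each buys: the paper's complementary count is shorter and tailored to the $3\times3$, ``at least $7$'' instance; your degree count proves the sharp extremal statement $z(3,3;2,2)=6$ (note that the rectangle-free support of size $6$ is exactly that of $P_{3,3,6}$, which is why Theorem~\ref{thm:max_simple} is tight), and it scales directly to $m\times n$ grids, which would be the natural tool if one wanted analogues of Lemma~\ref{lem:contains_Tij} for larger simple forms.
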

\begin{proof}
The $3\times3$ grid of possible terms has $9$ cells.
Choosing $7$ cells leaves at most $2$ empty.
Any rectangle of four cells $(i,k),(i,l),(j,k),(j,l)$ must be completely filled, otherwise the two empty cells would have to avoid all four corners of that rectangle, which cannot happen for all possible rectangles simultaneously.
\end{proof}

\begin{Prop}\label{prop:8term_SOSrank}
Any eight-term simple biquadratic form has $\operatorname{sos}(P) \le 6$.
Any seven-term simple biquadratic form has $\operatorname{sos}(P) \le 5$.
\end{Prop}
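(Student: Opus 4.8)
The plan is to derive both statements as immediate numerical corollaries of Lemma~\ref{lem:contains_Tij}, combined with the subadditivity of the SOS rank under addition of forms with disjoint support, i.e. $\operatorname{sos}(P+R)\le\operatorname{sos}(P)+\operatorname{sos}(R)$. The engine of the argument is the two-square decomposition recorded above, which realizes the rectangle $T_{ijkl}=x_i^2y_k^2+x_j^2y_l^2+x_i^2y_l^2+x_j^2y_k^2$ with exactly two squares, since the cross terms of $(x_iy_k+x_jy_l)^2$ and $(x_iy_l-x_jy_k)^2$ cancel. Every simple form carries unit coefficients, so each of its terms $x_a^2y_b^2=(x_ay_b)^2$ is on its own a single square.

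First I would treat the seven-term case. A seven-term simple form $P$ has at least $7$ distinct terms, so Lemma~\ref{lem:contains_Tij} furnishes some block $T_{ijkl}\subseteq P$ with $i\neq j$, $k\neq l$. I would then split $P=T_{ijkl}+R$, where $R$ is the simple form carrying the remaining $7-4=3$ terms, each a single square. Subadditivity gives $\operatorname{sos}(P)\le\operatorname{sos}(T_{ijkl})+\operatorname{sos}(R)\le 2+3=5$. The eight-term case is identical: since $8\ge 7$, Lemma~\ref{lem:contains_Tij} again produces a block $T_{ijkl}\subseteq P$, and writing $P=T_{ijkl}+R$ leaves $R$ with $8-4=4$ single-square terms, so $\operatorname{sos}(P)\le 2+4=6$.

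The step needing care—rather than a genuine obstacle—is the bookkeeping in the splitting $P=T_{ijkl}+R$: I must confirm that the four absorbed terms are precisely terms of $P$, so that $R$ is again a simple form supported on the complementary index pairs and the two-square identity reproduces the rectangle with no residual cross terms. For simple (hence unit-coefficient) forms this matching is automatic and the decomposition of $T_{ijkl}$ involves no free parameters, so the counts $2+3$ and $2+4$ are exact. I do not anticipate any deeper difficulty, as the combinatorial substance has already been discharged in Lemma~\ref{lem:contains_Tij} and the proposition is essentially its arithmetic consequence. If one wished to sharpen the bounds, the same extraction could be iterated on $R$ whenever $R$ still contains a rectangle, but this refinement is unnecessary for the stated inequalities.
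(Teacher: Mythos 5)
Your proposal is correct and follows exactly the paper's own argument: invoke Lemma~\ref{lem:contains_Tij} to extract a rectangle $T_{ijkl}$, write $P = T_{ijkl} + Q$ with the remaining terms, and count $2+4=6$ squares for eight terms and $2+3=5$ for seven. The only difference is that you spell out the bookkeeping (subadditivity, exactness of the two-square identity) that the paper leaves implicit, which is fine.
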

\begin{proof}
By Lemma~\ref{lem:contains_Tij}, such a form contains some $T_{ijkl}$.
Write $P = T_{ijkl} + Q$, where $Q$ consists of the remaining terms.
Since $T_{ijkl}$ needs two squares and each term of $Q$ is a single square, we obtain the stated bounds.
\end{proof}

\begin{Prop}\label{prop:9term_SOSrank}
The nine-term form $P_{3,3,9} = \sum_{i=1}^3 \sum_{j=1}^3 x_i^2 y_j^2$ satisfies $\operatorname{sos}(P_{3,3,9}) \le 4$.
\end{Prop}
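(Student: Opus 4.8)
The plan is to exploit the fact that $P_{3,3,9}$ is not merely a sum of nine square terms, but actually \emph{factors} as a product of two quadratics in separate variable groups. The crucial first step is to observe that
\[
P_{3,3,9} = \sum_{i=1}^3\sum_{j=1}^3 x_i^2 y_j^2 = \left(\sum_{i=1}^3 x_i^2\right)\left(\sum_{j=1}^3 y_j^2\right) = \|\vx\|^2\,\|\vy\|^2 .
\]
This reformulation is the whole point, because a product of two sums of squares admits a classical compression via the Lagrange (Brahmagupta--Fibonacci) identity, which is far more economical than the obvious nine-square decomposition.

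Next I would apply Lagrange's identity in dimension three. It expresses the product $\|\vx\|^2\|\vy\|^2$ as the square of the inner product $x_1y_1+x_2y_2+x_3y_3$ together with the squares of the three $2\times 2$ minors $x_iy_j-x_jy_i$:
\[
\|\vx\|^2\,\|\vy\|^2 = (x_1y_1+x_2y_2+x_3y_3)^2 + (x_1y_2-x_2y_1)^2 + (x_1y_3-x_3y_1)^2 + (x_2y_3-x_3y_2)^2 .
\]
Each of the four summands is the square of a bilinear form in $(\vx,\vy)$, so this exhibits $P_{3,3,9}$ as a sum of $1+\binom{3}{2}=4$ squares, which gives $\operatorname{sos}(P_{3,3,9})\le 4$ at once.

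The only thing left to verify is that the right-hand side really equals $\sum_{i,j} x_i^2 y_j^2$, i.e.\ that upon expansion all the mixed cross terms cancel. This is a routine check: the diagonal contributions $x_i^2y_i^2$ come solely from the first square, the off-diagonal contributions $x_i^2y_j^2$ with $i\neq j$ come in pairs from the three minor squares, and the genuinely mixed terms $x_ix_k y_jy_l$ cancel between the first square and the corresponding minor (for instance the $+2x_1x_2y_1y_2$ from the inner-product square is cancelled by the $-2x_1x_2y_1y_2$ from $(x_1y_2-x_2y_1)^2$). I do not expect any genuine obstacle here; the entire content of the argument is the recognition of the product structure, after which Lagrange's identity does all the work.
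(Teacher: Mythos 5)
Your proposal is correct and uses exactly the same decomposition as the paper: the four squares you obtain from Lagrange's identity are precisely the inner-product square $(x_1y_1+x_2y_2+x_3y_3)^2$ and the three $2\times 2$ minor squares that the paper writes down explicitly. The only difference is presentational --- you derive the decomposition conceptually from the product structure $\|\vx\|^2\|\vy\|^2$, while the paper states it directly and verifies by expansion.
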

\begin{proof}
The following explicit decomposition uses only four squares:
\[
P_{3,3,9} = (x_1 y_1 + x_2 y_2 + x_3 y_3)^2 + (x_2 y_3 - x_3 y_2)^2\] \[ + (x_3 y_1 - x_1 y_3)^2 + (x_1 y_2 - x_2 y_1)^2.
\]
Expanding verifies that all cross terms cancel.
\end{proof}

\begin{Thm}\label{thm:max_simple}
The maximum SOS rank of $3 \times 3$ simple biquadratic forms is $6$.
\end{Thm}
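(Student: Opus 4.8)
The plan is to prove the stated equality by combining a matching lower and upper bound on the quantity $\max_{P} \operatorname{sos}(P)$, where the maximum runs over all $3\times 3$ simple biquadratic forms (i.e.\ those $P = \sum_{(i,j)} c_{ij}\, x_i^2 y_j^2$ with $c_{ij}\ge 0$, so that $P$ is PSD). The lower bound is already in hand: Theorem~\ref{thm:3x3-tight} exhibits the six-term form $P_{3,3,6}$ with $\operatorname{sos}(P_{3,3,6}) = 6$, so the maximum is at least $6$. The entire remaining task is therefore to establish the matching upper bound $\operatorname{sos}(P)\le 6$ for \emph{every} such form $P$.

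For the upper bound I would argue by cases on the number $s$ of distinct terms $x_i^2 y_j^2$ that appear in $P$ (equivalently, the number of nonzero coefficients $c_{ij}$), with $1\le s\le 9$. The key elementary observation is that each individual term factors as a single square, $c_{ij}\, x_i^2 y_j^2 = (\sqrt{c_{ij}}\, x_i y_j)^2$, so any simple form admits the trivial one-square-per-term decomposition, giving $\operatorname{sos}(P)\le s$. This settles every case with $s\le 6$ at once. The three remaining cases are precisely those handled by the preceding results: Proposition~\ref{prop:8term_SOSrank} yields $\operatorname{sos}(P)\le 5$ when $s=7$ and $\operatorname{sos}(P)\le 6$ when $s=8$, while Proposition~\ref{prop:9term_SOSrank} gives $\operatorname{sos}(P_{3,3,9})\le 4$ for the unique nine-term form. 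In all cases $\operatorname{sos}(P)\le 6$, and together with the lower bound this forces equality.

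Because Theorem~\ref{thm:3x3-tight} and the term-counting Propositions carry the analytic weight, there is no substantive obstacle left in this final step; the proof is a synthesis. The one point deserving a moment's care is the trivial bound $\operatorname{sos}(P)\le s$ for small $s$, which guarantees that a form with few terms cannot secretly require more than six squares and thereby closes the only conceivable gap in the case analysis. It is worth emphasizing in the write-up the slightly counterintuitive phenomenon this reveals: the worst case is attained exactly at $s=6$ (by $P_{3,3,6}$), not at the densest form $P_{3,3,9}$, which needs only four squares — so adding terms can \emph{decrease} the SOS rank, and the maximum sits at an intermediate level of sparsity.
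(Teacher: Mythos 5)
Your proof is correct and follows essentially the same route as the paper: the lower bound from Theorem~\ref{thm:3x3-tight}, the trivial one-square-per-term bound for $s\le 6$, and Propositions~\ref{prop:8term_SOSrank} and~\ref{prop:9term_SOSrank} for $s=7,8,9$. One caveat worth fixing: your opening parenthetical redefines ``simple'' to allow arbitrary nonnegative coefficients $c_{ij}\ge 0$, which is the class the paper calls \emph{diagonal}, and for that larger class your argument would genuinely break --- Proposition~\ref{prop:8term_SOSrank} rests on the two-square identity for $T_{ijkl}$ with unit coefficients, Proposition~\ref{prop:9term_SOSrank} is stated only for the all-ones form $P_{3,3,9}$, and for weighted coefficients the paper can only prove the bound $7$ (Theorem~\ref{thm:diagonal_bound}), explicitly leaving open whether some weighted form attains it. Since the rest of your write-up (e.g.\ ``the unique nine-term form'') tacitly assumes unit coefficients, the proof as executed does match the theorem as stated; just delete or correct that parenthetical so the class you maximize over is the one the propositions actually cover.
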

\begin{proof}
The six-term form $P_{3,3,6}$ has SOS rank $6$ (Theorem~\ref{thm:3x3-tight}).
For any simple form with $t$ terms, Proposition~\ref{prop:8term_SOSrank} and Proposition~\ref{prop:9term_SOSrank} show $\operatorname{sos}(P) \le 6$ when $t \ge 7$, while for $t \le 6$ trivially $\operatorname{sos}(P) \le t \le 6$.
Hence $6$ is the maximum.
\end{proof}

\subsection{Extension to $m \times m$ Simple Forms}
The $3\times3$ result can be generalized to higher dimensions, yielding a linear lower bound on the SOS rank of simple biquadratic forms.

\begin{Lem}\label{lem:support-property}
For $m\ge 3$, let
\[
S = \{(i,i):i=1,\dots,m\} \cup \{(i,i+1):i=1,\dots,m\}
\]
with indices taken modulo $m$. Then for any two distinct pairs $(i,j),(p,q)\in S$ with $i\neq p$ and $j\neq q$, at least one of $(i,q)$ or $(p,j)$ does not belong to $S$.
\end{Lem}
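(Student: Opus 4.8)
The plan is to translate the set-membership conditions into a statement about consecutive pairs on the cycle $\mathbb{Z}/m\mathbb{Z}$ and then argue by contradiction. First I would record the basic membership rule: a pair $(a,b)$ lies in $S$ precisely when $b \in \{a, a+1\}$ (indices taken mod $m$), since the two families defining $S$ contribute exactly the columns $a$ and $a+1$ in row $a$. In other words, row $a$ ``sees'' the two-element set $\{a, a+1\}$, which I would think of as the edge $\{a, a+1\}$ of the cycle $C_m$.

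Next I would assume, toward a contradiction, that \emph{both} $(i,q) \in S$ and $(p,j) \in S$. Applying the membership rule to the four pairs $(i,j)$, $(p,q)$, $(i,q)$, $(p,j)$ yields $j \in \{i, i+1\}$, $q \in \{p, p+1\}$, $q \in \{i, i+1\}$, and $j \in \{p, p+1\}$. Combining the first and fourth conditions shows $j \in \{i, i+1\} \cap \{p, p+1\}$, and combining the second and third shows $q \in \{i, i+1\} \cap \{p, p+1\}$ as well. Thus both $j$ and $q$ lie in the common intersection $\{i, i+1\} \cap \{p, p+1\}$.

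The crux is then to show that, for $m \ge 3$ and $i \neq p$, this intersection contains at most one element, i.e.\ two distinct length-two intervals (edges of $C_m$) share at most one endpoint. I would verify this directly: each set has exactly two elements, so an intersection of size two forces $\{i, i+1\} = \{p, p+1\}$ as sets; since $i \neq p$, this requires $i \equiv p+1$ and $i+1 \equiv p$, hence $2 \equiv 0 \pmod m$, which can hold only when $m \le 2$. As $m \ge 3$ excludes this, the intersection is a singleton (or empty), and therefore $j = q$. This contradicts the hypothesis $j \neq q$, so at least one of $(i,q)$, $(p,j)$ must fail to lie in $S$, completing the argument.

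I expect the only genuine subtlety to be the modular bookkeeping in this last step — specifically, confirming that the wraparound at $i = m$ never lets two distinct rows see the same pair of columns, which is exactly where the hypothesis $m \ge 3$ is used. Everything else is a direct unwinding of the definition of $S$.
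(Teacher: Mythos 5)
Your proof is correct and follows essentially the same route as the paper's: assume both $(i,q)$ and $(p,j)$ lie in $S$, deduce that $j$ and $q$ both lie in $\{i,i+1\}\cap\{p,p+1\}$, and show that for $m\ge 3$ two distinct rows cannot share both columns. The only cosmetic difference is that you derive the final contradiction as $2\equiv 0 \pmod m$ rather than concluding $p\equiv i$ directly from the set equality, which is the same fact phrased the other way around.
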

\begin{proof}
Suppose for contradiction that both $(i,q)\in S$ and $(p,j)\in S$.
Since $(i,j)\in S$, we have $j\in\{i,i+1\}$; since $(i,q)\in S$, we have $q\in\{i,i+1\}$.
Similarly, from $(p,q)\in S$ we get $q\in\{p,p+1\}$, and from $(p,j)\in S$ we get $j\in\{p,p+1\}$.
Thus $\{j,q\}\subseteq\{i,i+1\}\cap\{p,p+1\}$. As $j\neq q$, the intersection contains two distinct elements, which forces $\{i,i+1\}=\{p,p+1\}$ as sets modulo $m$.
For $m\ge 3$, this implies $p\equiv i$ (mod~$m$), contradicting $i\neq p$.
\end{proof}

\begin{Thm}\label{thm:m-by-m}
Let $m \ge 3$.
Define the $m \times m$ simple biquadratic form
\[
P_{m,m,2m}(\vx,\vy) = \sum_{i=1}^m x_i^2 y_i^2 + \sum_{i=1}^m x_i^2 y_{i+1}^2,
\]
where the index $y_{m+1}$ is interpreted as $y_1$ (cyclic addition modulo $m$).
Then
\[
\operatorname{sos}\big(P_{m,m,2m}\big) = 2m.
\]
\end{Thm}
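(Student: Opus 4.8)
The upper bound $\operatorname{sos}(P_{m,m,2m}) \le 2m$ is immediate, since each of the $2m$ terms is a perfect square $x_i^2 y_j^2 = (x_iy_j)^2$. The entire argument is therefore devoted to the matching lower bound $\operatorname{sos}(P_{m,m,2m}) \ge 2m$, and the plan is to reduce it to a statement about orthogonality of a family of vectors, in the same spirit as the sketch for Theorem~\ref{thm:3x3-tight}.

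Suppose $P_{m,m,2m} = \sum_{k=1}^r L_k^2$ with each $L_k = \sum_{i,j=1}^m c^{(k)}_{ij}\, x_i y_j$ a bilinear form. For each pair $(i,j)$ I would collect the coefficients across all squares into a vector $\vu_{ij} = (c^{(1)}_{ij},\dots,c^{(r)}_{ij}) \in \R^r$. Expanding $\sum_k L_k^2$ and matching the coefficient of each monomial $x_i x_p y_j y_q$ against $P_{m,m,2m}$ then translates the SOS identity into inner-product conditions on the $\vu_{ij}$. The first step is to match the pure square monomials $x_i^2 y_j^2$: since $P_{m,m,2m}$ has coefficient $1$ on exactly the pairs $(i,j)\in S$ and $0$ otherwise, this forces $\|\vu_{ij}\|^2 = 1$ for $(i,j)\in S$ and $\vu_{ij} = 0$ for $(i,j)\notin S$.

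Next I would match the mixed monomials, all of which have coefficient $0$ in $P_{m,m,2m}$. Matching $x_i^2 y_j y_q$ with $j\neq q$ gives $\langle \vu_{ij},\vu_{iq}\rangle = 0$, and matching $x_i x_p y_j^2$ with $i\neq p$ gives $\langle \vu_{ij},\vu_{pj}\rangle = 0$; these handle all pairs in $S$ sharing a row or a column. The remaining case is two pairs $(i,j),(p,q)\in S$ with $i\neq p$ and $j\neq q$: matching the coefficient of $x_i x_p y_j y_q$ produces $\langle \vu_{ij},\vu_{pq}\rangle + \langle \vu_{iq},\vu_{pj}\rangle = 0$. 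This is exactly where Lemma~\ref{lem:support-property} enters, and it is the crux of the argument: the lemma guarantees that at least one of $(i,q),(p,j)$ lies outside $S$, so the corresponding $\vu$ vanishes and forces $\langle \vu_{ij},\vu_{pq}\rangle = 0$.

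Combining the three cases shows that $\{\vu_{ij}:(i,j)\in S\}$ is a set of $|S| = 2m$ pairwise orthogonal unit vectors in $\R^r$, hence linearly independent, so $r \ge 2m$. Together with the upper bound this yields $\operatorname{sos}(P_{m,m,2m}) = 2m$. I expect the main obstacle to be the bookkeeping in the mixed case: one must verify that the four index pairs $(i,j),(p,q),(i,q),(p,j)$ are genuinely distinct (which follows from $i\neq p$ and $j\neq q$) so that the coefficient-matching identity really reads $\langle \vu_{ij},\vu_{pq}\rangle + \langle \vu_{iq},\vu_{pj}\rangle = 0$, and only then invoke the combinatorial Lemma to split off the vanishing term.
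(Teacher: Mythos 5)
Your proposal is correct and follows essentially the same route as the paper's own proof: the same coefficient-vector construction, the same three cases of coefficient matching, and the same use of Lemma~\ref{lem:support-property} to kill one inner product in the mixed case, yielding $2m$ orthonormal vectors in $\R^r$ and hence $r \ge 2m$. No gaps; the distinctness bookkeeping you flag is handled exactly as you describe.
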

\begin{proof}
The upper bound $\operatorname{sos}(P_{m,m,2m}) \le 2m$ is obvious: each square term $x_i^2y_j^2$ can be taken as a single square $(\sqrt{1}\,x_i y_j)^2$, giving a decomposition with exactly $2m$ squares.

For the lower bound, assume that $P_{m,m,2m}$ admits an SOS decomposition with $R$ squares, i.e.
\[
P_{m,m,2m} = \sum_{t=1}^R L_t(\vx,\vy)^2,
\]
where each $L_t$ is bilinear:
\[
L_t(\vx,\vy) = \sum_{i=1}^m \sum_{j=1}^m c_{ij}^{(t)} x_i y_j, \qquad t=1,\dots,R.
\]

Because $P_{m,m,2m}$ contains only the $2m$ square terms listed and no cross terms $x_i x_p y_j y_q$ with $(i,j)\neq (p,q)$, we have for every $(i,j)$ in the support $S$:
\[
\sum_{t=1}^R \bigl(c_{ij}^{(t)}\bigr)^2 = 1,
\]
and for $(i,j)\notin S$, $c_{ij}^{(t)} = 0$ for all $t$.

Now consider the cross terms $x_i x_p y_j y_q$ with $(i,j) \neq (p,q)$.
Since such terms do not appear in $P_{m,m,2m}$, their coefficients in the expansion $\sum_{t=1}^R L_t^2$ must vanish.
For $i=p$ or $j=q$, this yields respectively
\[
\sum_{t=1}^R c_{ij}^{(t)} c_{iq}^{(t)} = 0 \quad (j\neq q), \qquad
\sum_{t=1}^R c_{ij}^{(t)} c_{pj}^{(t)} = 0 \quad (i\neq p).
\]
For $i\neq p$ and $j\neq q$, the coefficient is
\[
\sum_{t=1}^R \bigl( c_{ij}^{(t)} c_{pq}^{(t)} + c_{iq}^{(t)} c_{pj}^{(t)} \bigr) = 0.
\]

By Lemma~\ref{lem:support-property}, for distinct $(i,j),(p,q)\in S$ with $i\neq p$ and $j\neq q$, either $c_{iq}^{(t)}=0$ for all $t$ or $c_{pj}^{(t)}=0$ for all $t$.
Thus the condition $\sum_t (c_{ij}^{(t)}c_{pq}^{(t)} + c_{iq}^{(t)}c_{pj}^{(t)})=0$ reduces to $\sum_t c_{ij}^{(t)}c_{pq}^{(t)}=0$.

Combined with the conditions for $i=p$ or $j=q$, we conclude that for \emph{any} two distinct pairs $(i,j),(p,q)\in S$,
\[
\sum_{t=1}^R c_{ij}^{(t)} c_{pq}^{(t)} = 0.
\]

Define the vectors $\mathbf{C}_{ij} = \bigl(c_{ij}^{(1)},\dots,c_{ij}^{(R)}\bigr) \in \R^R$ for $(i,j)\in S$.
The conditions $\sum_t (c_{ij}^{(t)})^2 = 1$ and $\sum_t c_{ij}^{(t)} c_{pq}^{(t)} = 0$ for $(i,j)\neq (p,q)$ imply that the $2m$ vectors $\{\mathbf{C}_{ij} : (i,j)\in S\}$ are pairwise orthogonal and each has Euclidean norm $1$.
Consequently, they form a set of $2m$ orthonormal vectors in $\R^R$, which is possible only if $R \ge 2m$.
Thus $\operatorname{sos}(P_{m,m,2m}) \ge 2m$.

Combining the two inequalities gives $\operatorname{sos}(P_{m,m,2m}) = 2m$.
\end{proof}

{
\subsection{A General Lower Bound for $\mathrm{BSR}_{\mathrm{simple}}(m,n)$}

We denote by \(\mathrm{BSR}_{\mathrm{simple}}(m,n)\) the maximum SOS rank over all \emph{simple} \(m \times n\) biquadratic forms.

\begin{Lem}[Monotonicity]\label{lem:monotonicity}
For any $m, n \ge 3$, we have $\mathrm{BSR}_{\mathrm{simple}}(m, n+1) \ge \mathrm{BSR}_{\mathrm{simple}}(m, n) + 1$.
\end{Lem}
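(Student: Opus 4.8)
The plan is to establish monotonicity by a padding construction: start from an extremal simple form in the smaller index range and append a single square term that uses the fresh variable $y_{n+1}$ in a ``disjoint'' way, then show that this necessarily consumes one additional square. Concretely, since there are only finitely many simple $m\times n$ forms (each is determined by its support $S_0\subseteq\{1,\dots,m\}\times\{1,\dots,n\}$), the maximum $\mathrm{BSR}_{\mathrm{simple}}(m,n)=:s$ is attained by some $P$. I would then define the simple $m\times(n+1)$ form $P' = P + x_1^2 y_{n+1}^2$ and argue that $\operatorname{sos}(P')\ge s+1$, which gives the claimed inequality immediately.

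To prove $\operatorname{sos}(P')\ge s+1$, I would take an arbitrary SOS decomposition $P'=\sum_{t=1}^R L_t^2$ with $L_t=\sum_{i,j} c_{ij}^{(t)} x_i y_j$ and compare coefficients, exactly as in the proof of Theorem~\ref{thm:m-by-m}. Writing $\mathbf{C}_{ij}=(c_{ij}^{(1)},\dots,c_{ij}^{(R)})\in\R^R$, the absence in $P'$ of any term $x_i^2 y_{n+1}^2$ with $i\neq 1$ forces $c_{i,n+1}^{(t)}=0$ for all $i\neq 1$ and all $t$, while the coefficient of $x_1^2 y_{n+1}^2$ gives $\|\mathbf{C}_{1,n+1}\|=1$. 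The key computation is that every mixed term $x_1 x_i\, y_{n+1}\, y_q$ with $q\le n$ (including $i=1$) is absent from $P'$; comparing coefficients and using $c_{i,n+1}^{(t)}=0$ for $i\neq 1$ collapses each such relation to $\sum_t c_{1,n+1}^{(t)} c_{iq}^{(t)}=0$, i.e.\ $\mathbf{C}_{1,n+1}$ is orthogonal to every $\mathbf{C}_{iq}$ with $(i,q)\in S_0$.

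I would then restrict each $L_t$ to its $(y_1,\dots,y_n)$-part, $M_t = L_t - c_{1,n+1}^{(t)} x_1 y_{n+1}$. Using the orthogonality relations above one checks that the cross terms cancel exactly, so $\sum_{t=1}^R M_t^2 = P$; that is, $\{M_t\}$ is an SOS representation of $P$. Its associated PSD Gram matrix in the monomials $x_i y_j$, $(i,j)\in S_0$, is $G=\bigl(\sum_t c_{ij}^{(t)} c_{pq}^{(t)}\bigr)_{(i,j),(p,q)\in S_0}$, whose rank equals $\dim W$ where $W=\operatorname{span}\{\mathbf{C}_{ij}:(i,j)\in S_0\}\subseteq\R^R$. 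Since the minimal number of squares representing $P$ is at most the rank of any PSD Gram matrix of $P$, we get $s=\operatorname{sos}(P)\le\dim W$. Finally, because $\mathbf{C}_{1,n+1}$ is a unit vector orthogonal to all of $W$, we have $\dim W\le R-1$, so $R\ge s+1$. As the decomposition was arbitrary, $\operatorname{sos}(P')\ge s+1$.

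I expect the main obstacle to be the bookkeeping in the third paragraph: verifying that dropping the $y_{n+1}$ terms recovers $P$ exactly rather than some perturbation, which hinges on the precise cancellation of the mixed terms $x_1^2 y_{n+1} y_q$ and $x_1 x_i y_{n+1} y_q$. The other point requiring care is the identification $\operatorname{sos}(P)\le\operatorname{rank}(G)=\dim W$: one must invoke the standard correspondence between sums of squares of bilinear forms and PSD Gram matrices in the monomials $x_i y_j$, noting that a rank-$r$ PSD Gram matrix yields an $r$-square representation via its spectral decomposition. Once these two facts are in place, the orthogonal unit vector $\mathbf{C}_{1,n+1}$ delivers the decisive dimension drop.
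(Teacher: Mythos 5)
Your proposal is correct and follows essentially the same route as the paper: the identical padding construction $P_+ = P + x_1^2 y_{n+1}^2$, and the same coefficient comparison showing that the $y_{n+1}$-part of each square is a multiple of $x_1 y_{n+1}$ whose coefficient vector is orthogonal to all the remaining coefficient vectors. The only difference is in the final step's phrasing: the paper concludes by contradiction (the relation $\sum_t c_t M_t = 0$ contradicts the linear independence of the forms in a minimal representation), whereas you conclude directly via the Gram-matrix rank bound $\operatorname{sos}(P) \le \rank(G) = \dim W \le R-1$; these are equivalent formulations of the same standard fact, with yours being slightly more self-contained since it proves the linear-algebra fact it invokes.
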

\begin{proof}
Let $P(\vx, \vy)$ be an $m \times n$ simple biquadratic form with $\operatorname{sos}(P) = \mathrm{BSR}_{\mathrm{simple}}(m, n)$. Consider the $m \times (n+1)$ simple biquadratic form
\[
P_+(\vx, \vy, y_{n+1}) = P(\vx, \vy) + x_1^2 y_{n+1}^2.
\]
Clearly, $\operatorname{sos}(P_+) \le \operatorname{sos}(P) + 1 = \mathrm{BSR}_{\mathrm{simple}}(m, n) + 1$.

We now show that $\operatorname{sos}(P_+) \ge \mathrm{BSR}_{\mathrm{simple}}(m, n) + 1$. Suppose, for contradiction, that $P_+$ admits an SOS decomposition with $R = \mathrm{BSR}_{\mathrm{simple}}(m, n)$ squares:
\[
P_+ = \sum_{t=1}^R L_t(\vx, \vy, y_{n+1})^2,
\]
where each $L_t$ is bilinear in $\vx$ and $(\vy, y_{n+1})$. Write $L_t = M_t(\vx, \vy) + N_t(\vx) y_{n+1}$, where $M_t$ is bilinear in $\vx, \vy$ and $N_t$ is linear in $\vx$. Expanding and comparing coefficients, we have
\[
\sum_{t=1}^R M_t(\vx, \vy)^2 = P(\vx, \vy), \qquad \sum_{t=1}^R N_t(\vx)^2 = x_1^2, \qquad \sum_{t=1}^R M_t(\vx, \vy) N_t(\vx) = 0.
\]
The first equality shows that the $M_t$ give an SOS representation of $P$ with $R$ squares. Since $R = \operatorname{sos}(P)$ is minimal, the $M_t$ are linearly independent as bilinear forms. The second equality implies that the $N_t$ are multiples of $x_1$: $N_t = c_t x_1$ with $\sum_t c_t^2 = 1$. Then the third equality becomes $\sum_t c_t M_t(\vx, \vy) = 0$, which is a linear dependence among the $M_t$, a contradiction. Hence, $\operatorname{sos}(P_+) \ge R+1$, and so $\operatorname{sos}(P_+) = \mathrm{BSR}_{\mathrm{simple}}(m, n) + 1$. Therefore, $\mathrm{BSR}_{\mathrm{simple}}(m, n+1) \ge \mathrm{BSR}_{\mathrm{simple}}(m, n) + 1$.
\end{proof}

By symmetry, the same argument applied to the variables $\vx$ and $\vy$ yields the following corollary.

\begin{Cor}[Monotonicity in the first dimension]\label{cor:monotonicity-first}
For any $m, n \ge 3$, we have $\mathrm{BSR}_{\mathrm{simple}}(m+1, n) \ge \mathrm{BSR}_{\mathrm{simple}}(m, n) + 1$.
\end{Cor}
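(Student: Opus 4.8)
The plan is to replay the proof of Lemma~\ref{lem:monotonicity} verbatim with the roles of the two variable blocks $\vx$ and $\vy$ interchanged. First I would fix an $m \times n$ simple biquadratic form $P(\vx,\vy)$ that attains the maximum, so that $\operatorname{sos}(P) = \mathrm{BSR}_{\mathrm{simple}}(m,n) =: R$, and augment it by a single fresh $x$-variable rather than a fresh $y$-variable: set
\[
P_+(\vx, x_{m+1}, \vy) = P(\vx,\vy) + x_{m+1}^2 y_1^2 .
\]
Since $P$ does not involve $x_{m+1}$, this is a genuine $(m+1)\times n$ simple form, and the upper bound $\operatorname{sos}(P_+) \le R+1$ is immediate by appending the single square $(x_{m+1} y_1)^2$.

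For the matching lower bound I would argue by contradiction, assuming $P_+ = \sum_{t=1}^R L_t^2$ with each $L_t$ bilinear in $(\vx, x_{m+1})$ and $\vy$. The key structural step is to split off the new variable: write $L_t = M_t(\vx,\vy) + N_t(\vy)\, x_{m+1}$, where $M_t$ is bilinear in $(\vx,\vy)$ and $N_t$ is linear in $\vy$. Grouping the expansion of $\sum_t L_t^2$ according to its degree ($0$, $1$, $2$) in $x_{m+1}$ yields the three identities
\[
\sum_{t=1}^R M_t^2 = P(\vx,\vy), \qquad \sum_{t=1}^R N_t^2 = y_1^2, \qquad \sum_{t=1}^R M_t N_t = 0 .
\]

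From here the argument is the mirror image of Lemma~\ref{lem:monotonicity}. The first identity exhibits an $R$-square SOS representation of $P$; since $R = \operatorname{sos}(P)$ is minimal, the bilinear forms $M_1,\dots,M_R$ must be linearly independent. The second identity forces each $N_t$ to be a scalar multiple of $y_1$, say $N_t = c_t y_1$ with $\sum_t c_t^2 = 1$, so the $c_t$ are not all zero. Substituting into the third identity gives $\bigl(\sum_t c_t M_t\bigr) y_1 = 0$, hence $\sum_t c_t M_t = 0$ — a nontrivial linear dependence among the $M_t$, contradicting their independence. Therefore $\operatorname{sos}(P_+) \ge R+1$, which gives $\mathrm{BSR}_{\mathrm{simple}}(m+1,n) \ge R+1 = \mathrm{BSR}_{\mathrm{simple}}(m,n)+1$.

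I do not expect a genuine obstacle, since the statement is literally the $\vx \leftrightarrow \vy$ transpose of Lemma~\ref{lem:monotonicity}. The only point requiring a moment of care is the minimality claim that an $R$-square representation of $P$ with $R = \operatorname{sos}(P)$ must use linearly independent forms: a linear dependence $\sum_t \lambda_t M_t = 0$ would let one project the $M_t$ onto the span of a smaller basis and re-diagonalize the resulting Gram form, producing an SOS representation of $P$ with fewer than $R$ squares and contradicting minimality. This is exactly the fact that turns the induced dependence $\sum_t c_t M_t = 0$ into a contradiction, so I would make sure to either cite it or spell it out when formalizing the symmetric argument.
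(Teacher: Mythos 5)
Your proposal is correct and coincides with the paper's approach: the paper disposes of this corollary in one line, saying that ``by symmetry, the same argument'' as in Lemma~\ref{lem:monotonicity} applies with the roles of $\vx$ and $\vy$ interchanged, and what you have written is precisely that symmetric argument spelled out in full (adjoining $x_{m+1}^2 y_1^2$, splitting $L_t = M_t + N_t(\vy)\,x_{m+1}$, and deriving the contradictory linear dependence). Your added remark justifying that a minimal SOS representation consists of linearly independent bilinear forms is a correct proof of the standard Gram-matrix fact the paper uses implicitly.
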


\begin{Thm}\label{thm:mn-lower-bound}
For all $m, n \ge 3$, we have $\mathrm{BSR}_{\mathrm{simple}}(m, n) \ge m + n$. Consequently, $\mathrm{BSR}(m, n) \ge m + n$.
\end{Thm}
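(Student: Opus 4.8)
The plan is to bootstrap from the diagonal case established in Theorem~\ref{thm:m-by-m} using the two monotonicity results. First I would record the symmetry of the quantity: swapping the roles of $\vx$ and $\vy$ (and correspondingly $m$ and $n$) is a rank-preserving bijection between $m\times n$ and $n\times m$ simple forms, so $\mathrm{BSR}_{\mathrm{simple}}(m,n)=\mathrm{BSR}_{\mathrm{simple}}(n,m)$. This lets me assume without loss of generality that $m\le n$.

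Next I would invoke Theorem~\ref{thm:m-by-m} at the diagonal point $(m,m)$. Since $m\ge 3$, the form $P_{m,m,2m}$ is an $m\times m$ simple form with $\operatorname{sos}(P_{m,m,2m})=2m$, and therefore $\mathrm{BSR}_{\mathrm{simple}}(m,m)\ge 2m$. This is the base case of the climb.

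The main step is then a finite induction on $n-m\ge 0$ using Lemma~\ref{lem:monotonicity}. Starting from $\mathrm{BSR}_{\mathrm{simple}}(m,m)\ge 2m$ and applying the lemma $n-m$ times in the second coordinate gives
\[
\mathrm{BSR}_{\mathrm{simple}}(m,n)\ \ge\ \mathrm{BSR}_{\mathrm{simple}}(m,m)+(n-m)\ \ge\ 2m+(n-m)\ =\ m+n.
\]
Each application is legitimate because every intermediate point $(m,m),(m,m+1),\dots,(m,n)$ keeps the first coordinate fixed at $m\ge 3$ and the second coordinate at least $m\ge 3$, so the hypotheses of Lemma~\ref{lem:monotonicity} hold at every step. (Had I not reduced to $m\le n$, I would instead use Corollary~\ref{cor:monotonicity-first} to climb in the first coordinate; the symmetric argument is identical.)

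Finally, for the consequence $\mathrm{BSR}(m,n)\ge m+n$, I would observe that every simple $m\times n$ biquadratic form is in particular an $m\times n$ biquadratic form, so the maximum SOS rank over the larger class dominates that over the simple subclass: $\mathrm{BSR}(m,n)\ge \mathrm{BSR}_{\mathrm{simple}}(m,n)\ge m+n$. I do not expect a genuine obstacle here; the substantive content already lives in Theorem~\ref{thm:m-by-m} and the monotonicity lemmas, and the only care needed is to verify that the dimension hypotheses $\ge 3$ are preserved along the entire inductive chain, which holds since we only ever increase indices starting from values at least $3$.
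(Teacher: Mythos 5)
Your proof is correct, but it is anchored differently from the paper's, so a brief comparison is warranted. The paper runs an induction on $k=m+n$ from the single base point $(3,3)$, where Theorem~\ref{thm:max_simple} gives $\mathrm{BSR}_{\mathrm{simple}}(3,3)=6$, and climbs in whichever coordinate exceeds $3$: Corollary~\ref{cor:monotonicity-first} when $m>3$, Lemma~\ref{lem:monotonicity} when $m=3$. You instead anchor at the diagonal point $(m,m)$, where Theorem~\ref{thm:m-by-m} gives $\mathrm{BSR}_{\mathrm{simple}}(m,m)\ge 2m$, reduce to $m\le n$ via the $\vx\leftrightarrow\vy$ symmetry (which indeed preserves SOS rank, and which the paper itself invokes to justify Corollary~\ref{cor:monotonicity-first}), and then climb $n-m$ times in the second coordinate using only Lemma~\ref{lem:monotonicity}. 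Your bookkeeping is sound: every intermediate point has both coordinates at least $3$, so each invocation of the lemma is legitimate, and $2m+(n-m)=m+n$ gives exactly the claimed bound. The trade-off between the two routes: the paper's induction consumes only the $3\times3$ result and never uses the general $m\times m$ construction, so this theorem stands even without Theorem~\ref{thm:m-by-m}; your argument leans on that stronger theorem, but in exchange needs only one monotonicity direction and a shorter climb ($n-m$ steps rather than $m+n-6$). The final step, $\mathrm{BSR}(m,n)\ge\mathrm{BSR}_{\mathrm{simple}}(m,n)$ because simple forms are a subclass, is identical in both proofs.
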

\begin{proof}
We prove the inequality by induction on $k = m + n$.

\noindent\textbf{Base case:} $k = 6$, i.e., $m = n = 3$. By Theorem~\ref{thm:max_simple}, $\mathrm{BSR}_{\mathrm{simple}}(3,3)=6 = 3+3$.

\noindent\textbf{Inductive step:} Assume the inequality holds for all $m', n' \ge 3$ with $m' + n' < k$. Consider $m, n \ge 3$ with $m + n = k$.

If $m > 3$, then by Corollary~\ref{cor:monotonicity-first},
\[
\mathrm{BSR}_{\mathrm{simple}}(m, n) \ge \mathrm{BSR}_{\mathrm{simple}}(m-1, n) + 1.
\]
Since $(m-1) + n = k - 1$, the induction hypothesis gives $\mathrm{BSR}_{\mathrm{simple}}(m-1, n) \ge (m-1) + n = m + n - 1$. Hence,
\[
\mathrm{BSR}_{\mathrm{simple}}(m, n) \ge (m + n - 1) + 1 = m + n.
\]

If $m = 3$, then $n = k - 3 \ge 3$. By Lemma~\ref{lem:monotonicity},
\[
\mathrm{BSR}_{\mathrm{simple}}(3, n) \ge \mathrm{BSR}_{\mathrm{simple}}(3, n-1) + 1.
\]
Since $3 + (n-1) = k - 1$, the induction hypothesis gives $\mathrm{BSR}_{\mathrm{simple}}(3, n-1) \ge 3 + (n-1) = n + 2$. Hence,
\[
\mathrm{BSR}_{\mathrm{simple}}(3, n) \ge (n + 2) + 1 = n + 3 = 3 + n.
\]

Thus, in both cases we have $\mathrm{BSR}_{\mathrm{simple}}(m, n) \ge m + n$.

Finally, since $\mathrm{BSR}(m, n) \ge \mathrm{BSR}_{\mathrm{simple}}(m, n)$, we obtain $\mathrm{BSR}(m, n) \ge m + n$ for all $m, n \ge 3$.
\end{proof}

\begin{example}[Explicit family attaining the lower bound]\label{ex:explicit-family}
For any $m, n \ge 3$, define the simple biquadratic form
\[
Q_{m,n}(\vx,\vy) =
\sum_{i=1}^{m} x_i^2 y_i^2 + \sum_{i=1}^{m} x_i^2 y_{i+1}^2 + \sum_{j=m+1}^{n} x_1^2 y_j^2,
\]
where in the second sum we interpret $y_{m+1}=y_1$ when $m=n$, and otherwise $y_{m+1}$ is the $(m+1)$-th $y$ variable. Then $Q_{m,n}$ has exactly $m+n$ terms and satisfies $\operatorname{sos}(Q_{m,n}) = m+n$.
\end{example}

\begin{proof}[Proof sketch]
The upper bound $\operatorname{sos}(Q_{m,n}) \le m+n$ is immediate since each term is already a square. For the lower bound, we show that any SOS decomposition of $Q_{m,n}$ must involve at least $m+n$ squares.

Consider the support set $S$ of $Q_{m,n}$, which consists of:
\begin{itemize}
    \item The diagonal pairs $(i,i)$ for $i=1,\dots,m$
    \item The off-diagonal pairs $(i,i+1)$ for $i=1,\dots,m$ (with $y_{m+1}=y_1$ when $m=n$)
    \item The pairs $(1,j)$ for $j=m+1,\dots,n$ (when $n > m$)
\end{itemize}
This gives exactly $|S| = m+n$ terms.

Assume $Q_{m,n} = \sum_{t=1}^R L_t(\vx,\vy)^2$ with each $L_t$ bilinear. By comparing coefficients of $x_i^2y_j^2$, we find that for each $(i,j) \in S$, the vectors $\mathbf{C}_{ij} = (c_{ij}^{(1)},\dots,c_{ij}^{(R)})$ satisfy $\|\mathbf{C}_{ij}\|^2 = 1$, where $c_{ij}^{(t)}$ are the coefficients of $L_t$. Moreover, for any two distinct pairs $(i,j),(p,q) \in S$, the cross-term conditions force $\mathbf{C}_{ij} \cdot \mathbf{C}_{pq} = 0$. Thus the vectors $\{\mathbf{C}_{ij} : (i,j) \in S\}$ form an orthonormal set in $\mathbb{R}^R$, implying $R \ge |S| = m+n$.
\end{proof}

\begin{remark}
Theorem~\ref{thm:m-by-m} exhibits an explicit $m \times m$ simple biquadratic form with SOS rank exactly $2m$, showing that the lower bound in Theorem~\ref{thm:mn-lower-bound} is tight for $m=n$. For non-square cases, the question of whether $\mathrm{BSR}_{\mathrm{simple}}(m,n)$ can exceed $m+n$ remains open. Theorems~\ref{thm:m-by-m} and~\ref{thm:mn-lower-bound} together imply
\[
\mathrm{BSR}(m,n) \ge m+n \qquad (m,n\ge 3).
\]
Recall that the universal upper bound from \cite{QCX26} gives $\mathrm{BSR}(m,n) \le mn-1$. For the square case $m=n$, this leaves a gap between $2m$ and $m^2-1$, which grows quadratically with $m$. For $m=2$, it is known that $\mathrm{BSR}(2,2)=3$ (see \cite{QCX25}), which equals $2m-1$. It remains open whether $\mathrm{BSR}(m,m)$ can reach the order $O(m^2)$ or remains linear in $m$.
\end{remark}}

\section{Diagonal Biquadratic Forms}
A \textbf{diagonal biquadratic form} has the shape
\[
P(\vx,\vy) = \sum_{i=1}^{m}\sum_{j=1}^{n} a_{ij}x_i^2y_j^2,
\qquad a_{ij} \in \R.
\]
It is PSD iff all $a_{ij} \ge 0$, and in that case it is automatically SOS.

For distinct indices $i,j \in \{1,2,3\}$ and distinct $k,l \in \{1,2,3\}$, define
\[
W_{ijkl}(\vx,\vy) = a_{ik}x_i^2y_k^2 + a_{jl}x_j^2y_l^2 + a_{il}x_i^2y_l^2 + a_{jk}x_j^2y_k^2.
\]
Assume $a_{ik}, a_{jl}, a_{il}, a_{jk} > 0$ and let
\[
c = a_{ik}a_{jl} - a_{il}a_{jk}.
\]
If $c \ge 0$, then $\operatorname{sos}(W_{ijkl}) \le 3$, with equality $\operatorname{sos}(W_{ijkl}) = 2$ when $c = 0$.

For $3 \times 3$ diagonal forms we obtain an improved universal bound.

\begin{Thm}\label{thm:diagonal_bound}
The SOS rank of any $3 \times 3$ PSD diagonal biquadratic form
\[
P(\vx,\vy) = \sum_{i=1}^{3}\sum_{j=1}^{3} a_{ij}x_i^2y_j^2,\qquad a_{ij}\ge 0,
\]
satisfies $\operatorname{sos}(P) \le 7$.
\end{Thm}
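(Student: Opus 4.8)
The plan is to bound $\operatorname{sos}(P)$ in terms of the number of cells $(i,j)$ with $a_{ij}>0$, splitting into cases according to how many of the nine coefficients vanish. First I would record the trivial fact that each term $a_{ij}x_i^2y_j^2=(\sqrt{a_{ij}}\,x_iy_j)^2$ is a single square, so if at most $7$ of the $a_{ij}$ are positive then $\operatorname{sos}(P)\le 7$ immediately. The substance of the theorem therefore lies in the two remaining cases: eight positive coefficients, and nine positive coefficients.

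For the eight-coefficient case (exactly one $a_{ij}=0$) I would use the rectangle form $W_{ijkl}$ directly. Of the nine possible $2\times 2$ blocks (choices of two rows and two columns), exactly four contain any prescribed cell, so five of them avoid the unique empty cell; pick one such block, all four of whose coefficients are positive. Orienting it so that $c\ge 0$ (always possible, since swapping the two columns replaces $c$ by $-c$), the inequality $\operatorname{sos}(W_{ijkl})\le 3$ handles those four terms in at most three squares, while the remaining four positive terms are single squares, giving $\operatorname{sos}(P)\le 3+4=7$.

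The crux is the all-positive case, where one rectangle is not enough: any $2\times2$ block leaves an ``L-shaped'' complement of five cells, and one checks (using that a zero coefficient forces the corresponding bilinear coefficients to vanish, since a sum of squares equal to $0$ is identically $0$) that this complement has a forced-diagonal Gram matrix and hence genuinely needs five squares, giving only $3+5=8$; moreover two $2\times2$ blocks can never be disjoint in a $3\times3$ grid, so iterating rectangles does not help either. The idea I would pursue is a \emph{nested} pair of rectangles sharing a single corner. Concretely, choose any $2\times2$ submatrix and orient it along its dominant diagonal so that its determinant $c\ge 0$; after a permutation of rows and columns (which preserves both diagonality and the SOS rank) move the far endpoint of that diagonal to the central cell $(2,2)$. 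I take only the two ``main'' squares of $W_{ijkl}$ for this first block: by the explicit form of that decomposition they reproduce the three non-central cells exactly and deposit the nonnegative residual $c/a_{ik}$ at $(2,2)$. The opposite block through $(2,2)$, namely rows $\{2,3\}$ and columns $\{2,3\}$, then has all four coefficients positive (the residual together with three original ones) and is absorbed by a second application of $W$ in at most three squares, while the two leftover cells $(1,3),(3,1)$ are single squares. This yields $\operatorname{sos}(P)\le 2+3+2=7$.

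The main obstacle is precisely this all-positive case and, within it, guaranteeing that the residual deposited at the shared corner is nonnegative so that the second rectangle has positive data. The device that resolves it is the freedom to orient the first $2\times2$ submatrix along whichever diagonal has the larger product and to relabel so that the chosen corner is central; every $2\times2$ submatrix has a dominant diagonal, so such a choice always exists. The only routine verifications left are that the two nested rectangles, sharing only the cell $(2,2)$, produce no uncancelled cross terms (each $W$-block is individually cross-term-free, and their supports overlap only in the monomial $x_2y_2$, whose coefficient adds up to $a_{22}$ correctly), and the degenerate sub-case $c=0$, where the first block already uses two squares to cover all four of its cells and the second contributes only its three non-central single squares, again totalling seven.
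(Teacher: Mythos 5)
Your proposal is correct and follows essentially the same route as the paper's proof: the trivial case of at most seven positive coefficients, a single oriented rectangle $W_{ijkl}$ plus four single squares when exactly one coefficient vanishes, and, in the all-positive case, the splitting trick that covers the top-left block with two squares while depositing a nonnegative residual $c/a_{11}$ at the shared corner $(2,2)$, absorbed by the second block in at most three squares, for a total of $2+3+2=7$. If anything, your treatment of the orientation issue (choosing the dominant diagonal so the determinant is nonnegative, both for the first block's residual and for the second block) is spelled out more explicitly than the paper's appeal to ``by symmetry.''
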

\begin{proof}
Let $t = |\{ (i,j): a_{ij}>0 \}|$ be the number of positive coefficients.
We proceed by cases.

\noindent\textbf{Case 1: $t \le 7$.}
Write each positive term as a square:
\[
P = \sum_{a_{ij}>0} \big( \sqrt{a_{ij}}\, x_i y_j \big)^2,
\]
which uses exactly $t \le 7$ squares.

\noindent\textbf{Case 2: $t = 8$.}
Exactly one entry of the $3\times 3$ matrix $(a_{ij})$ is zero.
By permuting indices, we may assume the zero is at $(3,3)$: $a_{33}=0$.
Consider the $2\times 2$ principal submatrix with rows $\{1,2\}$ and columns $\{1,2\}$.
All four entries $a_{11}, a_{12}, a_{21}, a_{22}$ are positive, so they form a positive $W_{1212}$.
Write
\[
P = W_{1212} + Q,
\]
where $Q$ consists of the remaining four positive terms:
\[
Q = a_{13}x_1^2y_3^2 + a_{23}x_2^2y_3^2 + a_{31}x_3^2y_1^2 + a_{32}x_3^2y_2^2.
\]
Now $W_{1212}$ has $\operatorname{sos}(W_{1212}) \le 3$, and each term of $Q$ is a single square.
Hence
\[
\operatorname{sos}(P) \le \operatorname{sos}(W_{1212}) + 4 \le 3 + 4 = 7.
\]

\noindent\textbf{Case 3: $t = 9$ (all $a_{ij}>0$).}
We now handle the fully positive case with a constructive splitting argument.

\textbf{Step 3.1 ：C Splitting $a_{22}$.}
Let
\[
\alpha = \frac{a_{12}a_{21}}{a_{11}}, \qquad \beta = a_{22} - \alpha.
\]
Because $a_{11},a_{12},a_{21}>0$, we have $\alpha > 0$.
If $\beta \ge 0$, we proceed; if $\beta < 0$, swap the roles of rows $1$ and $2$ to obtain a nonnegative splitting (by symmetry, at least one such splitting exists).

Now write the first four terms as
\[
a_{11}x_1^2y_1^2 + \alpha x_2^2y_2^2 + a_{12}x_1^2y_2^2 + a_{21}x_2^2y_1^2.
\]
This is a $W'_{1212}$ with coefficients $(a_{11},\alpha,a_{12},a_{21})$.
For this block,
\[
c' = a_{11}\alpha - a_{12}a_{21} = a_{11}\cdot\frac{a_{12}a_{21}}{a_{11}} - a_{12}a_{21} = 0,
\]
so it can be written as \textbf{two squares}:
\[
S_1^2 + S_2^2,
\]
where
\[
S_1 = \sqrt{a_{11}}\,x_1y_1 + \sqrt{\alpha}\,x_2y_2, \]
\[ S_2 = \sqrt{a_{12}}\,x_1y_2 - \sqrt{a_{21}}\,x_2y_1.
\]

\textbf{Step 3.2 ：C Using the leftover part $\beta$.}
The remaining coefficient for $x_2^2y_2^2$ is $\beta = a_{22} - \alpha \ge 0$.
Now consider the four terms
\[
\beta x_2^2y_2^2 + a_{23}x_2^2y_3^2 + a_{32}x_3^2y_2^2 + a_{33}x_3^2y_3^2.
\]
If $\beta = 0$, these three terms can be written as three separate squares, giving at most $2+3=5$ squares total.
If $\beta > 0$, we have a full $2\times 2$ block (rows $\{2,3\}$, columns $\{2,3\}$).
Let
\[
c'' = \beta a_{33} - a_{23}a_{32}.
\]
If $c'' \ge 0$, this block is itself a PSD $W_{2233}$ and can be written with at most 3 squares.
If $c'' < 0$, we swap rows 2 and 3 (or columns 2 and 3) and repeat the splitting trick; by symmetry, we can always arrange a decomposition that uses at most 3 squares for this block.

Thus, after handling $W'_{1212}$ with 2 squares and $W_{2233}$ with at most 3 squares, we have covered six of the nine terms.
The remaining three terms are
\[
a_{13}x_1^2y_3^2,\; a_{31}x_3^2y_1^2,
\]
and possibly part of $a_{22}$  already covered.
Actually, careful inspection shows that after the two blocks above, the only terms not yet covered are exactly
\[
a_{13}x_1^2y_3^2 \quad\text{and}\quad a_{31}x_3^2y_1^2.
\]
Each is a single square.
Hence the total number of squares is at most
\[
2 \;(\text{for }W'_{1212}) \;+\; 3 \;(\text{for }W_{2233}) \] \[+\; 2 \;(\text{for the two leftovers}) \;=\; 7.
\]

\textbf{Step 3.3 ：C Verifying the count.}
We have partitioned the nine terms into:
\begin{itemize}
    \item Block $W'_{1212}$: 4 terms, 2 squares.
    \item Block $W_{2233}$: 4 terms, at most 3 squares.
    \item Remaining two terms: 2 squares.
\end{itemize}
Total squares $\le 2 + 3 + 2 = 7$.

Since in all subcases we obtain an SOS decomposition with at most 7 squares, the theorem is proved.
\end{proof}

\begin{remark}
The bound $7$ improves the general upper bound $mn-1 = 8$ for arbitrary $3\times3$ SOS biquadratic forms \cite{QCX26}.
For simple diagonal forms (all $a_{ij}=1$), Theorem~\ref{thm:max_simple} shows the maximum is actually $6$.
Whether $7$ is attainable by some diagonal form with unequal positive coefficients remains open.
\end{remark}

\section{Toward the Maximum SOS Rank for $3 \times 3$ Forms}

The simple form $P_{3,3,6}$ has SOS rank $6$, providing the lower bound $\mathrm{BSR}(3,3) \ge 6$.
To investigate whether $6$ might also be an upper bound, we examine what happens when we enrich $P_{3,3,6}$ with an additional square term.

A natural candidate for a form that could potentially increase the SOS rank is
\[
P_{+}(\vx,\vy)=P_{3,3,6}(\vx,\vy)+(x_{1}y_{3}+x_{2}y_{1})^{2}.
\]
Notice that $P_{3,3,6}$ contains neither $x_1^2y_3^2$ nor $x_2^2y_1^2$.
Thus $P_{+}$ introduces two new square terms ($x_1^2y_3^2$ and $x_2^2y_1^2$) together with the cross term $2x_1x_2y_1y_3$.
This enrichment fills missing entries in the support of $P_{3,3,6}$ and creates new bilinear couplings; therefore, if any added square were to force an increase in the SOS rank, this would be a likely example.

A priori we have $\operatorname{sos}(P_{+})\le 7$, because we may simply take the six separate squares of $P_{3,3,6}$ together with the explicit square $(x_1y_3+x_2y_1)^2$.
Surprisingly, a more efficient decomposition exists.

\begin{Prop}\label{prop:P_plus_6}
$\operatorname{sos}(P_{+})\le 6$.
\end{Prop}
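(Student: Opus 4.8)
The plan is to produce one explicit SOS decomposition of $P_+$ into six squares and to verify it by direct expansion. First I would expand
\[
P_+ = x_1^2y_1^2 + x_2^2y_2^2 + x_3^2y_3^2 + x_1^2y_2^2 + x_2^2y_3^2 + x_3^2y_1^2 \] \[ + x_1^2y_3^2 + x_2^2y_1^2 + 2x_1x_2y_1y_3,
\]
so that $P_+$ carries eight of the nine diagonal terms (all but $x_3^2y_2^2$) together with the single genuine cross term $2x_1x_2y_1y_3$. The trivial bound is $7$, and a greedy split does no better: pulling out $(x_1y_1+x_2y_3)^2$ to absorb the cross term leaves a six‑term simple form every pair of whose support cells forces orthogonality (exactly as in the proof of Theorem~\ref{thm:3x3-tight}), hence needs six squares and yields total $7$. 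A genuinely mixed decomposition is therefore required.

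The decisive observation is the monomial identity $x_1y_1\cdot x_2y_3 = x_1y_3\cdot x_2y_1 = x_1x_2y_1y_3$, which means the cross term can be produced as the sum of the cross terms of \emph{two} different squares — one organized around $x_1y_1,x_2y_3$ and one around $x_1y_3,x_2y_1$ — each contributing one copy of $x_1x_2y_1y_3$. Splitting $2x_1x_2y_1y_3$ evenly between them leaves only ``half'' of each of $x_2^2y_3^2$ and $x_2^2y_1^2$ to be supplied elsewhere, and I would absorb these leftover halves into two further squares that also carry the cyclic diagonal terms $x_3^2y_3^2$ and $x_3^2y_1^2$, choosing coefficients so that the spurious $x_2x_3y_1y_3$ they introduce cancels in a $\pm$ pair. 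Concretely, the target is
\[
P_+ = \left(x_1y_1 + \frac12 x_2y_3\right)^2 + \left(x_1y_3 + \frac12 x_2y_1\right)^2 + \left(x_3y_3 + \frac{\sqrt3}{2} x_2y_1\right)^2 \] \[ + \left(x_3y_1 - \frac{\sqrt3}{2} x_2y_3\right)^2 + (x_1y_2)^2 + (x_2y_2)^2,
\]
and expanding term by term confirms that every present diagonal coefficient equals $1$ (with $x_2^2y_3^2$ and $x_2^2y_1^2$ each receiving $\tfrac14+\tfrac34$), that $x_3^2y_2^2$ is absent, that the two $\pm\tfrac{\sqrt3}{2}$ contributions to $x_2x_3y_1y_3$ cancel, and that the two copies of $x_1x_2y_1y_3$ add to the required $2x_1x_2y_1y_3$.

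To pin down the coefficients $\tfrac12,\tfrac{\sqrt3}{2}$ rather than guess them, I would pass to the Gram/orthonormal‑frame formulation used in Theorem~\ref{thm:m-by-m}: writing $P_+ = \sum_{t=1}^R L_t^2$ and collecting the coefficient vectors $\mathbf{C}_{ij}\in\R^R$ of each surviving monomial $x_iy_j$, the diagonal and cross conditions become $\|\mathbf{C}_{ij}\|=1$, a list of forced orthogonalities $\mathbf{C}_{ij}\cdot\mathbf{C}_{pq}=0$, and the one \emph{relaxed} relation $\mathbf{C}_{11}\cdot\mathbf{C}_{23} + \mathbf{C}_{13}\cdot\mathbf{C}_{21} = 1$ coming from the identity above. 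Fixing an orthonormal frame on the four mutually orthogonal monomials $x_2y_2,\,x_3y_3,\,x_1y_2,\,x_3y_1$ and placing the remaining four vectors reduces the existence question to a $4\times4$ positive‑semidefinite rank‑$2$ completion; a feasible point of that completion yields precisely the decomposition displayed above and certifies $R=6$.

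The main obstacle is the simultaneous bookkeeping in this last step: the cross term must be split so that the two leftover diagonal halves can be paired with $x_3^2y_3^2$ and $x_3^2y_1^2$ in a manner that cancels the induced $x_2x_3y_1y_3$ while keeping all coefficient vectors unit‑length and confined to six dimensions. Establishing that the relevant $4\times4$ completion is genuinely PSD of rank $2$ — equivalently, that no seventh square is forced — is the crux; once such a feasible point is exhibited, the concluding verification is a routine expansion.
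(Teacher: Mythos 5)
Your proposal is correct: expanding your six squares reproduces $P_+$ exactly — each of the eight diagonal terms receives total coefficient $1$ (with $x_2^2y_3^2$ and $x_2^2y_1^2$ each getting $\tfrac14+\tfrac34$), the two copies of $x_1x_2y_1y_3$ from the first two squares sum to the required $2x_1x_2y_1y_3$, and the $\pm\sqrt{3}\,x_2x_3y_1y_3$ contributions from the third and fourth squares cancel. Your proof technique is the same as the paper's (exhibit an explicit six-square certificate and verify by expansion), but your certificate is genuinely different and, in a sense, transposed: the paper keeps $(x_3y_1)^2$ and $(x_3y_3)^2$ as singleton squares and spends four squares on the remaining $2\times 3$ block in $(x_1,x_2)\times(y_1,y_2,y_3)$, using three-term squares whose spurious terms $x_1^2y_1y_2$, $x_2^2y_1y_2$ and $x_1x_2y_2y_3$ must cancel pairwise; you instead keep $(x_1y_2)^2$ and $(x_2y_2)^2$ as singletons and spend four two-term squares on the $3\times 2$ block in $(x_1,x_2,x_3)\times(y_1,y_3)$, needing only the single cancellation $\pm\sqrt{3}\,x_2x_3y_1y_3$. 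Your decomposition is somewhat easier to verify (two-term squares and one cancellation pair, versus three cancellations in the paper's), and your opening observation that a greedy extraction of $(x_1y_1+x_2y_3)^2$ leaves a relabeled copy of $P_{3,3,6}$ — hence cannot do better than seven squares — is a nice explanation, absent from the paper, of why a genuinely mixed decomposition is necessary. One caution: the Gram-matrix/PSD-completion discussion in your final two paragraphs is motivational scaffolding only; once the explicit certificate is displayed and verified, nothing about rank-$2$ completions needs to be established, so you should not present that step as the ``crux'' of the argument — the displayed identity already completes the proof.
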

\begin{proof}
The following six squares give an explicit SOS decomposition:
\[
\begin{aligned}
P_{+}(\vx,\vy) &= \Bigl( \tfrac12 x_1 y_1 + x_2 y_3 + \tfrac{\sqrt{3}}{2} x_1 y_2 \Bigr)^2 \\
&\quad + \Bigl( \tfrac12 x_2 y_1 + x_1 y_3 - \tfrac{\sqrt{3}}{2} x_2 y_2 \Bigr)^2 \\
&\quad + \Bigl( \tfrac{\sqrt{3}}{2} x_1 y_1 - \tfrac12 x_1 y_2 \Bigr)^2 \\
&\quad + \Bigl( \tfrac12 x_2 y_2 + \tfrac{\sqrt{3}}{2} x_2 y_1 \Bigr)^2 \\
&\quad + (x_3 y_1)^2 + (x_3 y_3)^2 .
\end{aligned}
\]
Expanding and simplifying verifies that all terms match $P_{3,3,6}+(x_1y_3+x_2y_1)^2$, with all unwanted cross terms canceling.
\end{proof}

Thus, even in this ``dangerous'' case, where two missing square terms are added together with a new cross term - the SOS rank does not exceed $6$.
This observation lends further support to the conjecture that no $3\times3$ biquadratic form requires more than six squares.

\begin{conjecture}\label{conj:BSR33}
For $3\times3$ biquadratic forms,
\[
\mathrm{BSR}(3,3)=6.
\]
\end{conjecture}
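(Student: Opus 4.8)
The plan is to split the claim into the two inequalities $\mathrm{BSR}(3,3)\ge 6$ and $\mathrm{BSR}(3,3)\le 6$. The lower bound is already in hand: the simple form $P_{3,3,6}$ satisfies $\operatorname{sos}(P_{3,3,6})=6$ by Theorem~\ref{thm:3x3-tight}, so $\mathrm{BSR}(3,3)\ge 6$. All of the work therefore goes into the upper bound, i.e.\ into showing that \emph{every} $3\times3$ SOS biquadratic form admits a decomposition into at most six squares. This sharpens the universal bound $mn-1=8$ of \cite{QCX26} by two, and closing that gap of two is exactly what the conjecture asserts.

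I would attack the upper bound through the Gram-matrix (spectrahedral) description of SOS rank. Writing $z=(x_iy_j)_{i,j=1}^{3}\in\R^9$, any SOS representation $P=\sum_t L_t^2$ corresponds to a PSD Gram matrix $G=\sum_t c_t c_t^\T\succeq 0$ with $P(\vx,\vy)=z^\T G z$, and $\operatorname{sos}(P)$ equals the minimum rank of such a $G$. Two Gram matrices represent the same $P$ iff their difference annihilates $z$ on the Segre variety of rank-one $3\times3$ matrices; the space $R$ of these \emph{relation matrices} is spanned by the symmetrizations of the nine $2\times2$ minors $z_{ij}z_{kl}-z_{il}z_{kj}$, so $\dim R=9$ and the feasible Gram matrices form a $9$-dimensional affine slice of the cone $S^9_+$. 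The goal becomes the statement that every such slice contains a matrix of rank $\le 6$.

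The key step is a rank-reduction argument on this slice. At an extreme Gram matrix $G$ of rank $r$ with range $V$ one has $R\cap S(V)=\{0\}$, where $S(V)$ denotes the symmetric operators supported on $V$; the naive dimension count $9+\tfrac{r(r+1)}{2}\le 45$ only yields $r\le 8$. To descend to $6$ I would exploit that $R$ is \emph{precisely} the span of the $2\times2$ minors of the $3\times3$ Segre variety rather than a generic $9$-dimensional space. Concretely, the plan is to show that no extreme Gram matrix of rank $7$ or $8$ can occur for this particular $R$: one classifies the admissible ranges $V$ up to the natural $\mathrm{GL}(3)\times\mathrm{GL}(3)$ action together with coordinate permutations, and in each orbit produces an explicit minor supported on $V$ that strictly lowers the rank, iterating until $r\le 6$. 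The concrete reductions would reuse the explicit moves already developed for the structured families — the $T_{ijkl}$- and $W_{ijkl}$-blocks and the six-square identity behind Proposition~\ref{prop:P_plus_6} — which illustrate how redistributing a minor collapses one square without altering the underlying form.

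The main obstacle is exactly this passage from the generic Pataki-type bound $8$ to the structured bound $6$: dimension counting alone is powerless, and ruling out rank-$7$ extreme Gram matrices demands genuinely new information about how the $2\times2$-minor relations meet the high-rank faces of $S^9_+$. Because $3\times3$ SOS forms vary in continuous moduli, there is no finite list of cases to verify, so any successful argument must either furnish a \emph{uniform} rank-reducing construction valid across all symmetry orbits of ranges $V$, or recast the problem in the language of positive and decomposable linear maps on $S^3$ and settle it there. Controlling this step — equivalently, proving that the worst case is realized by $P_{3,3,6}$ and not by some form whose Gram spectrahedron stubbornly remains at rank $7$ — is where essentially all the difficulty of the conjecture resides.
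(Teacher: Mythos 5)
The statement you are trying to prove is a \emph{conjecture} in the paper: the authors do not prove $\mathrm{BSR}(3,3)=6$, they only assemble evidence for it --- the lower bound $\mathrm{BSR}(3,3)\ge 6$ via Theorem~\ref{thm:3x3-tight}, and Proposition~\ref{prop:P_plus_6}, which shows that one natural ``dangerous'' enrichment of $P_{3,3,6}$ still has SOS rank at most $6$. The best bound proved anywhere in the paper is $6\le \mathrm{BSR}(3,3)\le 8$. Your treatment of the lower bound is correct and identical to the paper's. But your treatment of the upper bound is not a proof: it is a research program whose decisive step you yourself flag as unresolved. Concretely, the gap is this: after setting up the (correct) Gram-spectrahedron framework --- the $9$-dimensional relation space $R$ spanned by the symmetrized $2\times2$ minors, the extreme-point condition $R\cap S(V)=\{0\}$, and the Pataki-type count giving $r\le 8$ --- you assert that one should ``classify the admissible ranges $V$'' and ``in each orbit produce an explicit minor supported on $V$ that strictly lowers the rank,'' but no such classification is carried out, no rank-reducing construction is exhibited, and no argument is given that such a construction exists uniformly across the continuous moduli of $3\times3$ forms. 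Ruling out minimal Gram matrices of rank $7$ and $8$ for every feasible slice is precisely the open content of the conjecture, and nothing in your proposal (or in the paper's $T_{ijkl}$, $W_{ijkl}$, or Proposition~\ref{prop:P_plus_6} identities, which handle only specific structured forms) discharges it.

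To be clear about what would be needed: the paper's own structured results cover simple forms (Theorem~\ref{thm:max_simple}), diagonal forms (Theorem~\ref{thm:diagonal_bound}, and even there the bound is $7$, not $6$), and a single enriched example; none of these extends to an arbitrary $3\times3$ SOS biquadratic form, which generically has a full $9$-dimensional support with arbitrary cross terms. Your proposed reduction from the generic bound $8$ to $6$ would require genuinely new information about how the minor relations meet the rank-$7$ and rank-$8$ faces of $S^9_+$ --- you say as much in your final paragraph. So the proposal correctly identifies where the difficulty lies, but it does not contain a proof, and it could not be one: the statement remains open in the paper as well.
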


\begin{remark}
Proposition \ref{prop:P_plus_6} shows  even in the ``dangerous'' case, the SOS rank does not exceed $6$.
Together with Theorem~\ref{thm:max_simple}, which establishes $6$ as the maximum SOS rank for simple forms, this strongly supports Conjecture~\ref{conj:BSR33}.
The currently best known bounds are $6 \le \mathrm{BSR}(3,3) \le 8$; closing this gap remains an intriguing open problem.
\end{remark}

\section{Concluding Remarks}
{ We have determined the exact maximum SOS rank ($6$) for $3\times3$ simple biquadratic forms and proved an improved upper bound ($7$) for $3\times3$ diagonal forms.
Moreover, we have shown that for every $m\ge3$ there exists an $m\times m$ simple biquadratic form whose SOS rank equals $2m$, extending the $3\times3$ case and providing a linear lower bound in $m$.
Furthermore, we proved that for all $m, n \ge 3$, the maximum SOS rank of $m \times n$ simple biquadratic forms is at least $m+n$, which implies $\mathrm{BSR}(m,n) \ge m+n$.

Table~\ref{tab:bounds} summarizes the current knowledge on the worst-case SOS rank $\mathrm{BSR}(m,n)$ for small values of $m$ and $n$.

\begin{table}[h]
\centering
\caption{Known bounds on $\mathrm{BSR}(m,n)$}\label{tab:bounds}
\begin{tabular}{c|c|c}
$(m,n)$ & Lower bound & Upper bound \\ \hline
$(2,2)$ & $3$ (exact) & $3$ \\
$(3,2)$ & $4$ (exact) & $4$ \\
$(m,2)$, $m\ge 4$ & $m+1$ & $2m-1$ \\
$(3,3)$ & $6$ & $8$ (conjectured $6$) \\
$(m,n)$, $m,n\ge 3$ & $m+n$ & $mn-1$ \\
\end{tabular}
\end{table}

A natural open problem is to determine the exact value of $\mathrm{BSR}(3,3)$, the maximum SOS rank among all $3\times3$ SOS biquadratic forms.
Our results give $6 \le \mathrm{BSR}(3,3) \le 8$, and we conjecture that in fact $\mathrm{BSR}(3,3)=6$ (Conjecture~\ref{conj:BSR33}).
Proving or disproving this would complete the picture for $3\times3$ biquadratic forms and remains an interesting next step.

For larger $m$ and $n$, the gap between the lower bound $m+n$ and the general upper bound $mn-1$ widens.
Determining the true growth rate of $\mathrm{BSR}(m,n)$！whether it remains linear or can be quadratic in $m$ and $n$！is a challenging direction for future research.}

For the other references on the sos rank problem of biquadratic forms, see \cite{Ca73, Ch75, QCX25}.

\bigskip

\noindent\textbf{Acknowledgement}
This work was partially supported by Research Center for Intelligent Operations Research, The Hong Kong Polytechnic University (4-ZZT8), the National Natural Science Foundation of China (Nos. 12471282 and 12131004), and Jiangsu Provincial Scientific Research Center of Applied Mathematics (Grant No. BK20233002).

\medskip

\noindent\textbf{Data availability}
No datasets were generated or analysed during the current study.

\medskip

\noindent\textbf{Conflict of interest} The authors declare no conflict of interest.

\end{document}